\newtheorem{theorem}{Theorem}[section]
\newtheorem{lemma}[theorem]{Lemma}
\newtheorem{prop}[theorem]{Proposition}
\newtheorem{cor}[theorem]{Corollary}
\theoremstyle{definition}
\theoremstyle{remark}
\newtheorem{remark}[theorem]{Remark}
\numberwithin{equation}{section}
\def\Z{{\mathbb Z}}
\def\R{{\mathbb R}}
\def\aread{{\rm Diff}_\Omega^\infty (D^2, \partial D^2)}
\def\areas{{\rm Diff}_\Omega^\infty (S^2)_0}
\def\sym{{\mathfrak S}}
\def\trans{{\mathcal T}}
\begin{document}
\title[Quasi-morphisms on the group of area-preserving diffeomorphisms]
{Quasi-morphisms on the group of area-preserving diffeomorphisms of the $2$-disk via braid groups}
\author{Tomohiko Ishida}
\address{Graduate School of Mathematical Sciences,
University of Tokyo, 3-8-1 Komaba,
Meguro-ku, Tokyo 153-8914, Japan.}
\email{ishidat@ms.u-tokyo.ac.jp}
\subjclass[2000]{Primary 37C15, Secondary 37E30}
\date{\today}
\begin{abstract}
Recently Gambaudo and Ghys proved that 
there exist infinitely many quasi-morphisms 
on the group $\aread$ of area-preserving diffeomorphisms of the $2$-disk $D^2$.
For the proof, they constructed a homomorphism 
from the space of quasi-morphisms on the braid group 
to the space of quasi-morphisms on $\aread$.
In this paper, we study this homomorphism and prove its injectivity.
\end{abstract}
\keywords{area-preserving diffeomorphisms, symplectomorphisms, quasi-morphisms, pseudo-characters}

\maketitle
\section{Introduction}
For a group $G$, a function $\phi\colon G\to \R$ is called a {\it quasi-morphism}
if the real valued function on $G\times G$ defined by
\[ (g, h)\mapsto \phi (gh)-\phi (g)-\phi (h) \]
is bounded.
The real number
\[ D(\phi ) =\sup _{g, h\in G} |\phi (gh)-\phi (g)-\phi (h)| \]
is called the {\it defect} of $\phi$.
We denote the $\R$-vector space of quasi-morphisms 
on the group $G$ by $\hat{Q} (G)$.
By definition, bounded functions on groups are quasi-morphisms.
Hence we denote the set of bounded functions on the group $G$ by $C_b^1(G; \R)$ 
and consider the quotient space $Q(G)=\hat{Q}(G)/C_b^1(G; \R)$.
A quasi-morphism $\phi\colon G\to\R$ is said to be {\it homogeneous} if the equation
\[ \phi (g^p)=p\ \phi (g) \]
holds for any $g\in G$ and $p\in\Z$.
For any quasi-morphism $\phi$, 
a homogeneous quasi-morphism $\tilde{\phi}$ is defined by setting
\[ \tilde{\phi}(g)=\lim _{p\to\infty}\frac{1}{p}\phi (g^p). \]
The limit always exists for each element $g$ of $G$.
The new function $\tilde{\phi}$ is in fact a quasi-morphism 
equal to the original quasi-morphism $\phi$ as an element of $Q(G)$.
Thus we can identify the vector space of homogeneous quasi-morphisms 
on the group $G$ with $Q(G)$.
Homogeneous quasi-morphisms are invariant under conjugations.
Therefore we are interested in $Q(G)$ rather than $\hat{Q}(G)$.

Let $\aread$ be the group of area-preserving $C^\infty$-diffeomorphisms 
of the $2$-disk $D^2$,
which are the identity on a neighborhood of the boundary.
On the vector space $Q(\aread)$, the following theorem is known.
\begin{theorem}
[Entov-Polterovich \cite{ep03}, Gambaudo-Ghys \cite{gg04}]\label{gg}
The vector space \\
$Q(\aread )$ is infinite dimensional.
\end{theorem}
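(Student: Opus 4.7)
The plan is to build an infinite-dimensional subspace of $Q(\aread)$ by pulling back homogeneous quasi-morphisms from braid groups, via the Gambaudo-Ghys transfer. For each $n\geq 2$, let $X_n$ denote the unordered configuration space of $n$ points in $D^2$, so that $\pi_1(X_n)$ is the braid group $B_n$. Given $f\in\aread$ and a configuration $z\in X_n$, I would choose a smooth area-preserving isotopy $\{f_t\}_{t\in[0,1]}$ with $f_0=\mathrm{id}$ and $f_1=f$; this is possible because $\aread$ is path-connected, and the resulting path of configurations $t\mapsto f_t(z)\in X_n$ is unique up to homotopy rel endpoints by contractibility of $\aread$. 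Closing this path with a canonical path $\beta_{f(z),z}$ back to $z$ produces a loop in $X_n$, whose homotopy class I call $\gamma_n(f;z)\in B_n$.

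The transfer is then defined for a homogeneous $\varphi\in Q(B_n)$ by
\[ \Phi_n(\varphi)(f)=\int_{X_n}\varphi\bigl(\gamma_n(f;z)\bigr)\,dz. \]
Homogeneity of $\varphi$ absorbs the ambiguity coming from the closing arcs $\beta$ (which change the loop only by conjugations and bounded multiplications), and the integrand is bounded in $z$ for each fixed $f$ by word-length control coming from the isotopy, so the integral converges. To see that $\Phi_n(\varphi)$ is a quasi-morphism on $\aread$, I would compare $\gamma_n(fg;z)$ with the concatenation $\gamma_n(f;g(z))\cdot\gamma_n(g;z)$: the two loops differ only through the closing arcs, so $\varphi$ applied to either yields functions of $z$ that differ by a uniformly bounded amount. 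Integrating, and applying the change of variables $w=g(z)$ which is legal because $g$ preserves area (hence preserves the measure on $X_n$), delivers the quasi-morphism estimate.

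To finish, I would invoke the known abundance of quasi-morphisms on braid groups: $Q(B_n)$ is infinite-dimensional for $n\geq 3$, for instance by Bestvina-Fujiwara applied to the action of $B_n$ on the curve complex of the $n$-punctured disk, or by variations on the Meyer signature quasi-morphism and the explicit Rademacher-type families used by Gambaudo-Ghys. Given an infinite linearly independent family $\{\varphi_k\}\subset Q(B_n)$, I would construct compactly supported test diffeomorphisms $f_\ell\in\aread$, each realizing a prescribed pure braid on a small reference configuration $z_0$, and arrange that the matrix $\bigl(\Phi_n(\varphi_k)(f_\ell)\bigr)_{k,\ell}$ has infinite rank; this forces $\dim Q(\aread)=\infty$.

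The main obstacle is the last step: ruling out that the transfer map $\Phi_n$ collapses an infinite-dimensional source into a finite-dimensional target. Extracting linear independence after integration requires test diffeomorphisms whose braid traces $\gamma_n(f_\ell;z)$ behave in a controlled, $\varphi_k$-distinguishable way over a positive-measure set of $z$, which is a delicate geometric task. This is precisely the injectivity question that the rest of the paper addresses in full generality; for Theorem \ref{gg} it suffices to verify that infinitely many of the $\varphi_k$ remain independent after transfer.
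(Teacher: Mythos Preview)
The paper does not give its own proof of Theorem~\ref{gg}; it is quoted as a known result of Entov--Polterovich and Gambaudo--Ghys. What the paper \emph{does} supply is Theorem~\ref{inj}, the injectivity of $\Gamma_n\circ Q(i)\colon Q(B_n(D^2))\to Q(\aread)$, which together with the infinite-dimensionality of $Q(B_n(D^2))$ yields Theorem~\ref{gg} as a corollary. Your proposal is exactly this route: build the Gambaudo--Ghys transfer, feed in infinitely many independent quasi-morphisms on $B_n$, and check that independence survives. You have identified the structure correctly, and your last paragraph is honest that the substantive step---showing the transfer does not collapse the source---is deferred to the injectivity theorem.

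Two remarks on the comparison. First, the original Gambaudo--Ghys argument in \cite{gg04} did not prove injectivity of a general transfer; they worked with a \emph{specific} family (signatures of braid closures) and verified linear independence of the images by hand on explicit test diffeomorphisms. Your outline is closer in spirit to the present paper's generalization than to the original proof. Second, the Entov--Polterovich proof in \cite{ep03} goes through an entirely different mechanism (Calabi-type quasi-morphisms coming from quantum homology and spectral invariants), which your proposal does not touch.

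As a standalone proof your sketch has the gap you yourself name: the sentence ``arrange that the matrix $(\Phi_n(\varphi_k)(f_\ell))_{k,\ell}$ has infinite rank'' is the whole difficulty, and nothing in your outline explains how to do it. The paper's proof of Theorem~\ref{inj} supplies the missing mechanism: given a nonzero $\phi\in Q(B_n)$, it builds a diffeomorphism $g$ that braids $n$ small disks $U_1,\dots,U_n$ according to a pure braid $\beta$ with $\phi(\beta)\neq 0$, expands $\widetilde{\Gamma_n(\phi)}(g)$ as a polynomial in the areas $a_i=\operatorname{area}(U_i)$, and shows the coefficient of $a_1\cdots a_n$ is nonzero using the $\mathfrak{S}_n$-invariance of $\phi$. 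That polynomial argument is the idea your outline is missing.
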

To prove Theorem \ref{gg}, 
Entov and Polterovich explicitly constructed
uncountably many quasi-morphisms on $\aread$, which are linearly independent. 
After that Gambaudo and Ghys constructed 
countably many quasi-morphisms on $\aread$ by a different idea, 
which is to consider the suspension of area-preserving diffeomorphisms of the disk 
and average the value of the signature of the braids appearing in the suspension. 
By generalizing their strategy Brandenbursky \cite{brandenbursky11} defined the homomorphism
\[ \Gamma _n\colon Q(P_n(D^2))\to Q(\aread ), \]
which we review in Section \ref{proof}.
Here, $P_n(D^2)$ denotes the pure braid group on $n$-strands.

Let $B_n(D^2)$ be the braid group on $n$-strands.
The natural inclusion $i\colon P_n(D^2)\to B_n(D^2)$ induces the homomorphism
$Q(i)\colon Q(B_n(D^2))\to Q(P_n(D^2))$. 
In this paper, we study the homomorphism $\Gamma_n$ and prove the following theorem.
\begin{theorem}\label{inj}
The composition
\[ \Gamma _n\circ Q(i)\colon Q(B_n(D^2))\to Q(\aread ) \]
is injective.
\end{theorem}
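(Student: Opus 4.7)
The plan is to argue the contrapositive: given a nonzero class $\psi \in Q(B_n(D^2))$, I will exhibit $f \in \aread$ such that $|\Gamma_n(\psi|_{P_n})(f^k)|$ grows linearly in $k$, whence $\Gamma_n(Q(i)(\psi))$ is unbounded and therefore nonzero in $Q(\aread)$. Since any quasi-morphism agrees with its homogenization modulo bounded functions, I may assume from the outset that $\psi$ is a homogeneous quasi-morphism on $B_n(D^2)$; the hypothesis provides some $\beta \in B_n(D^2)$ with $\psi(\beta)\neq 0$.

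The central construction realizes $\beta$ as the monodromy of a specific isotopy. Pick $n$ pairwise disjoint closed disks $R_1,\ldots,R_n \subset D^2$ and a smooth area-preserving isotopy $\{h_t\}_{t \in [0,1]}$ with $h_0 = \mathrm{id}$, compactly supported in a thin neighborhood of a geometric braid representative of $\beta$, that rigidly transports each $R_i$ along the $i$-th strand and ends with $h_1$ permuting $\{R_i\}$ by the underlying permutation of $\beta$. Set $f = h_1 \in \aread$. Writing $X_n(D^2)$ for the ordered configuration space, the subset
\[
E = \{(z_1,\ldots,z_n)\in X_n(D^2): \text{exactly one } z_i \text{ lies in each } R_j\}
\]
is $f$-invariant, and by construction the Gambaudo--Ghys braid satisfies $\gamma(f, z)=\beta$ for every $z \in E$. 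Iterating gives $\gamma(f^k, z)=\beta^k$ on $E$ for all $k\in\N$.

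The cocycle identity $\gamma(f^k, z) = \gamma(f, f^{k-1}z)\cdots\gamma(f, z)$ together with the defect inequality for $\psi$ and the area-preservation of $f$ (implying $f$-invariance of both $E$ and $E^c$) yield, after integrating over $X_n(D^2)$ and using homogeneity $\psi(\beta^k) = k\psi(\beta)$,
\[
\Gamma_n(\psi|_{P_n})(f^k) = k\biggl(\mathrm{vol}(E)\,\psi(\beta) + \int_{E^c}\psi(\gamma(f,z))\,dz\biggr) + O(k),
\]
with implicit constant $D(\psi)\,\mathrm{vol}(E^c)$ in the $O(k)$ term. Were $\Gamma_n(\psi|_{P_n})$ bounded, dividing by $k$ and letting $k\to\infty$ would force
\[
\biggl|\mathrm{vol}(E)\,\psi(\beta) + \int_{E^c}\psi(\gamma(f,z))\,dz\biggr| \leq D(\psi)\,\mathrm{vol}(E^c).
\]

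The main obstacle is to derive $\psi(\beta) = 0$ from this inequality for a well-chosen realization. The strategy is to consider a family $f_\epsilon$ parametrized by a scale $\epsilon$ (obtained, say, by shrinking the disks $R_i$ and the supporting neighborhood around a fixed geometric braid) and to show that the ``correction'' integral over $E^c$ cannot exactly cancel $\mathrm{vol}(E)\psi(\beta)$ modulo $D(\psi)\,\mathrm{vol}(E^c)$. This requires a uniform word-length bound on the braids $\gamma(f_\epsilon,z)$ for $z \in E^c$ depending only on $\beta$ (not on $\epsilon$), together with control of the ratio $\mathrm{vol}(E^c)/\mathrm{vol}(E)$ along the family. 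The delicate part of this analysis is the behavior of $\gamma(f_\epsilon, z)$ on configurations with several strands in the same disk $R_j$ or with strands outside the support of the isotopy, and balancing these two competing scales is the technical heart of the argument.
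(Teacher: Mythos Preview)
Your proposal is a plan rather than a proof: the final paragraph openly defers the ``technical heart'' of the argument, and the deferred part does not go through as sketched.

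First, a small but genuine error: by definition $\gamma(f;z)$ is always a \emph{pure} braid (the loop $l(f;z)$ begins and ends at the fixed base point of $X_n(D^2)$), so the assertion $\gamma(f;z)=\beta$ on $E$ is impossible unless $\beta$ is pure. This is easily repaired by replacing $\beta$ by a suitable power, using homogeneity of $\psi$, but you must say so; once $\beta$ is pure the diffeomorphism $f$ fixes each $R_i$ setwise.

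The real gap is the scaling strategy. Shrinking the $R_i$ sends $\mathrm{vol}(E)\to 0$ while $\mathrm{vol}(E^c)$ stays bounded away from $0$, so your displayed inequality becomes easy to satisfy rather than contradictory. Worse, the part of $E^c$ consisting of configurations with two or more points in the same $R_j$ (and the remaining points distributed among the other $R_i$'s) has volume of the \emph{same} order $\prod a_i$ as $E$ itself; on such configurations $\gamma(f;z)$ is a fixed nontrivial braid with a priori nonzero $\psi$-value, so no uniform word-length bound or choice of a single scale $\epsilon$ will separate these contributions from the main term $\mathrm{vol}(E)\psi(\beta)$.

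The paper resolves exactly this issue by reversing your scaling direction and adding one idea you are missing. One enlarges the $R_i$ so that the complement $X_n(D^2)\setminus X_n(\bigcup R_i)$ has small volume (its contribution is then killed using only boundedness of $\psi(\gamma(g;\cdot))$). On $X_n(\bigcup R_i)$, which contains both your $E$ and the problematic multi-occupancy configurations, the integral is written as a polynomial in the areas $a_i=\mathrm{area}(R_i)$; the hypothesis that $\psi$ descends from $B_n(D^2)$ (hence is $\mathfrak{S}_n$-conjugation invariant) forces the coefficient of $a_1\cdots a_n$ to be a nonzero multiple of $\psi(\beta)$. One then chooses the $a_i$ \emph{unequal} to make this polynomial nonzero. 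Your setup, with all disks of a common size $\epsilon$, lacks precisely this degree of freedom.
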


\vskip 5pt
\noindent \textbf{Acknowledgments.}
This work is the main part of the author's doctoral thesis at University of Tokyo, 
under the supervision of Professor Takashi Tsuboi.
The author wishes to thank him for many helpful advices.
The author also thanks to Professors \'Etienne Ghys and Shigeyuki Morita 
for their warmly encouragement 
and to Professor Shigenori Matsumoto for valuable suggestions.
The author is very grateful to the referee 
for careful reading and pointing out errors in the manuscript.
The author is supported by JSPS Research Fellowships
for Young Scientists (23$\cdot$1352).

\section{Gambaudo and Ghys' construction and proof of the main theorem}\label{proof}

\begin{sloppypar}
In this section, we review Gambaudo and Ghys' construction \cite{gg04}
of quasi-morphisms on the group $\aread$ in a generalized form 
and prove Theorem \ref{inj}.
\end{sloppypar}

\begin{sloppypar}
Let $X_n(D^2)$ be the configuration space of ordered $n$-tuples in the $2$-disk $D^2$
and $x^0=(x _1^0, \dots , x_n^0)$ its base point.
For any $g\in\aread$ and for almost alls $x=(x _1, \dots , x_n)\in X_n(D^2)$, 
we define the pure braid $\gamma (g; x)$ as the following.
First we set the loop $l(g; x)\colon [0, 1]\to X_n(D^2)$ by
\[ l(g; x)(t)=\left\{ \begin{array}{ll}
\{ (1-3t)x_i^0+3tx_i\} &\displaystyle (0\leq t\leq\frac{1}{3}) \\[.6em]
\{ g_{3t-1}(x_i)\} &\displaystyle (\frac{1}{3}\leq t\leq\frac{2}{3})\\[.6em]
\{ (3-3t)g(x_i)+(3t-2)x_i^0\} &\displaystyle (\frac{2}{3}\leq t\leq 1) \\
\end{array}\right. ,\]
where $\{ g_t\}_{t\in[0, 1]}$ is a Hamiltonian isotopy such that $g_0$ is the identity and $g_1=g$.
We define the pure braid $\gamma (g; x)$ to be the braid represented by the loop $l(g; x)$.
For almost every $x$, the braid $\gamma (g; x)$ is well-defined. 
Furthermore, the braid $\gamma (g; x)$ is independent of the choice of the flow $\{ g_t\}$.
This is because of the fact the group $\aread$ is contractible, 
which is easily proved from the contractibility 
of the diffeomorphism group ${\rm Diff}^\infty (D^2, \partial D^2)$ of $D^2$ \cite{smale59}
and the homotopy equivalence between ${\rm Diff}^\infty (D^2, \partial D^2)$ and $\aread$ \cite{moser65}.
For a quasi-morphism $\phi$ on the pure braid group $P_n(D^2)$ on $n$-strands, 
we define the function $\hat{\Gamma}_n(\phi )\colon\aread\to\R$ by
\[ \hat{\Gamma}_n(\phi)(g)=\int _{x\in X_n(D^2)}\phi (\gamma (g; x))dx. \]
For any $\phi\in Q(P_n(D^2))$ and $g\in\aread$ the function $\phi (\gamma (g; \cdot ))$ is integrable 
and thus the map $\hat{\Gamma}_n\colon \hat{Q}(P_n(D^2))\to \hat{Q}(\aread)$ is well-defined \cite{brandenbursky11}.
The obtained function $\hat{\Gamma}_n(\phi )\colon\aread\to\R$ is also a quasi-morphism 
and the map $\hat{\Gamma}_n\colon \hat{Q}(P_n(D^2))\to \hat{Q}(\aread)$ 
is clearly $\R$-linear. 
Moreover, 
it is easily checked that any bounded function on $P_n(D^2)$ is mapped 
to a bounded function on $\aread$
and thus the homomorphism 
$\hat{\Gamma}\colon \hat{Q}(P_n(D^2)) \to \hat{Q}(\aread)$ 
induces the homomorphism $\Gamma _n \colon Q(P_n(D^2)) \to Q(\aread)$. 
\end{sloppypar}

\begin{remark}\label{homo}
\begin{sloppypar}
It is easy to see 
that the homomorphism $\Gamma _n \colon Q(P_n(D^2)) \to Q(\aread)$ 
maps the classical linking number homomorphism 
${\rm lk}_n\colon B_n(D^2)\to \R$ on the braid group 
to a homomorphism on $\aread $. 
In fact, 
the image of ${\rm lk}\colon B_n(D^2)\to \R$ 
by the homomorphism $\Gamma _n({\rm lk}_n)$
coincides with a constant multiple of the classical Calabi homomorphism on $\aread$ \cite{gg97}
and in this sense quasi-morphisms obtained in this way can be considered 
as generalizations of the Calabi homomorphism.
By an argument of Brandenbursky, 
which verify that the homomorphism 
$\Gamma\colon \hat{Q}(P_n)\to \hat{Q}(\aread )$ 
is well-defined, 
it is observed that quasi-morphisms obtained 
by the homomorphism $\hat{\Gamma}_n \colon \hat{Q}(P_n(D^2)) \to \hat{Q}(\aread)$
can be defined on the group 
of area-preserving $C^1$-diffeomorphisms of $D^2$,
as well as the Calabi homomorphism. 
\end{sloppypar}
\end{remark}

Now we are ready to prove Theorem \ref{inj}.

\begin{proof}[Proof of Theorem \ref{inj}]
Let us suppose that a homogeneous quasi-morphism $\phi\in\hat{Q}(B_n(D^2))$ is non-trivial.
Then there exists a braid $\beta\in B_n(D^2)$ such that $\phi (\beta )\neq 0$.
We may assume that $\beta$ is pure.
It is sufficient to prove 
that the homogeneous quasi-morphism $\hat{\Gamma}_n(\phi )\in\hat{Q}(\aread )$ is also non-trivial.
That is, there exists an area-preserving diffeomorphism $g\in\aread$ such that 
\[ \lim_{p\to\infty}\frac{1}{p}\Gamma _n(\phi)(g^p)\neq 0. \]

Let $A_{i, j}$ be the pure braid 
which twists only the $i$-th and the $j$-th strands for $1\leq i<j\leq n$ (see Figure \ref{pure}).
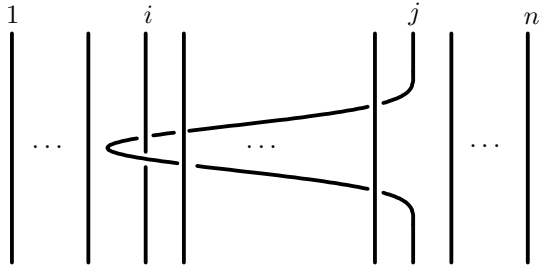
\begin{figure}[htbp]
\begin{center}
\unitlength 0.1in
\begin{picture}( 27.3000, 14.3500)(  8.7000,-34.0000)
%
\special{pn 20}%
\special{pa 3000 3200}%
\special{pa 3000 3400}%
\special{fp}%
\special{pa 3000 2400}%
\special{pa 3000 2200}%
\special{fp}%
%
\special{pn 20}%
\special{pa 1600 3400}%
\special{pa 1600 2900}%
\special{fp}%
\special{pa 1600 2820}%
\special{pa 1600 2200}%
\special{fp}%
%
\special{pn 20}%
\special{pa 1800 3400}%
\special{pa 1800 2200}%
\special{fp}%
\special{pa 2800 3400}%
\special{pa 2800 2200}%
\special{fp}%
%
\special{pn 20}%
\special{pa 1828 2710}%
\special{pa 1912 2700}%
\special{pa 2042 2684}%
\special{pa 2086 2680}%
\special{pa 2214 2664}%
\special{pa 2298 2654}%
\special{pa 2340 2650}%
\special{pa 2380 2644}%
\special{pa 2418 2640}%
\special{pa 2456 2634}%
\special{pa 2528 2624}%
\special{pa 2562 2620}%
\special{pa 2596 2614}%
\special{pa 2626 2610}%
\special{pa 2656 2604}%
\special{pa 2686 2600}%
\special{pa 2712 2594}%
\special{pa 2738 2590}%
\special{pa 2762 2584}%
\special{fp}%
%
\special{pn 20}%
\special{pa 3000 3200}%
\special{pa 3000 3150}%
\special{pa 2996 3134}%
\special{pa 2994 3130}%
\special{pa 2994 3124}%
\special{pa 2992 3120}%
\special{pa 2986 3110}%
\special{pa 2978 3100}%
\special{pa 2962 3084}%
\special{pa 2954 3080}%
\special{pa 2946 3074}%
\special{pa 2938 3070}%
\special{pa 2928 3064}%
\special{pa 2916 3060}%
\special{pa 2904 3054}%
\special{pa 2876 3044}%
\special{pa 2860 3040}%
\special{pa 2844 3034}%
\special{fp}%
%
\special{pn 20}%
\special{pa 2762 3014}%
\special{pa 2738 3010}%
\special{pa 2712 3004}%
\special{pa 2686 3000}%
\special{pa 2656 2994}%
\special{pa 2626 2990}%
\special{pa 2596 2984}%
\special{pa 2562 2980}%
\special{pa 2528 2974}%
\special{pa 2456 2964}%
\special{pa 2418 2960}%
\special{pa 2380 2954}%
\special{pa 2340 2950}%
\special{pa 2298 2944}%
\special{pa 2214 2934}%
\special{pa 2086 2920}%
\special{pa 2042 2914}%
\special{pa 1912 2900}%
\special{pa 1870 2894}%
\special{fp}%
%
\special{pn 20}%
\special{pa 1766 2880}%
\special{pa 1726 2874}%
\special{pa 1688 2870}%
\special{pa 1652 2864}%
\special{pa 1616 2860}%
\special{pa 1584 2854}%
\special{pa 1554 2850}%
\special{pa 1526 2844}%
\special{pa 1500 2840}%
\special{pa 1478 2834}%
\special{pa 1458 2830}%
\special{pa 1440 2824}%
\special{pa 1426 2820}%
\special{pa 1406 2810}%
\special{pa 1402 2804}%
\special{pa 1400 2800}%
\special{pa 1402 2794}%
\special{pa 1406 2790}%
\special{pa 1414 2784}%
\special{pa 1426 2780}%
\special{pa 1440 2774}%
\special{pa 1458 2770}%
\special{pa 1478 2764}%
\special{pa 1500 2760}%
\special{pa 1526 2754}%
\special{pa 1554 2750}%
\special{fp}%
%
\special{pn 20}%
\special{pa 1640 2734}%
\special{pa 1674 2730}%
\special{pa 1710 2724}%
\special{pa 1748 2720}%
\special{fp}%
%
\special{pn 20}%
\special{pa 2844 2564}%
\special{pa 2860 2560}%
\special{pa 2876 2554}%
\special{pa 2904 2544}%
\special{pa 2916 2540}%
\special{pa 2928 2534}%
\special{pa 2938 2530}%
\special{pa 2946 2524}%
\special{pa 2954 2520}%
\special{pa 2962 2514}%
\special{pa 2968 2510}%
\special{pa 2978 2500}%
\special{pa 2986 2490}%
\special{pa 2992 2480}%
\special{pa 2994 2474}%
\special{pa 2994 2470}%
\special{pa 2996 2464}%
\special{pa 3000 2450}%
\special{pa 3000 2400}%
\special{fp}%
\put(21.2500,-28.0000){\makebox(0,0)[lb]{$\dots$}}%
\put(10.0500,-28.0000){\makebox(0,0)[lb]{$\dots$}}%
%
\special{pn 20}%
\special{pa 900 2200}%
\special{pa 900 3400}%
\special{fp}%
%
\special{pn 20}%
\special{pa 1300 2200}%
\special{pa 1300 3400}%
\special{fp}%
%
\special{pn 20}%
\special{pa 3200 2200}%
\special{pa 3200 3400}%
\special{fp}%
\put(32.9500,-27.9500){\makebox(0,0)[lb]{$\dots$}}%
%
\special{pn 20}%
\special{pa 3600 2200}%
\special{pa 3600 3400}%
\special{fp}%
\put(15.9000,-21.4500){\makebox(0,0)[lb]{$i$}}%
\put(29.7500,-21.5500){\makebox(0,0)[lb]{$j$}}%
\put(8.7000,-21.4500){\makebox(0,0)[lb]{$1$}}%
\put(35.8000,-21.5000){\makebox(0,0)[lb]{$n$}}%
\end{picture}%
\end{center}
\caption{pure braid $A_{i, j}$}
\label{pure}
\end{figure}
Since the braid $\beta$ is pure, 
it can be written as a composition of $A_{i, j}$'s and their inverses.
We take $n$ disjoint subsets $U_i$'s of $D^2$.
Furthermore, for a pair of $(i, j)$, 
we take subsets $V_{i, j}$ and $W_{i, j}$ of $D^2$ 
such that $U_i\cup U_j\subset W_{i, j}\subset V_{i, j}$, 
$U_k\cap V_{i, j}=\emptyset$ if $k\neq i, j$ 
and $V_{i, j}, W_{i, j}$ are diffeomorphic to $D^2$. 
Let $\{ h_t\}_{t\in[0, 1]}$ be a path in $\aread$ 
such that the support of $h_t$ is contained in the interior of $V_{i, j}$ 
and rotates $W_{i, j}$ once.  
Taking paths $\{ h_t\}$'s constructed above for the all $A_{i, j}$'s 
which present $\beta$ and composing them, 
we have a path $\{ g_t\}_{t\in[0, 1]}$ in $\aread$ with $g_0=id$ 
which twists $U_i$'s in the form of the pure braid $\beta$.
If we set $g=g_1$, then $g$ is the identity on $U_i$'s 
and $\gamma (g; (x_1, \dots , x_n))=\beta$ for $x_i\in U_i$.
Then by setting $U=U_1\cup\dots\cup U_n$, we have
\begin{align*}
&\quad \lim_{p\to\infty}\frac{1}{p}\hat{\Gamma} _n(\phi )(g^p) \\
&=\lim _{p\to\infty}\frac{1}{p}
\left(\int _{x\in X_n(U)}\phi (\gamma (g^p; x))dx
+\int _{x\in X_n(D^2)\setminus X_n(U)}\phi(\gamma (g^p ; x))dx\right) \\
&=\int _{x\in X_n(U)}\phi (\gamma (g; x))dx
+\lim _{p\to\infty}\frac{1}{p}\int _{x\in X_n(D^2)\setminus X_n(U)}\phi (\gamma (g^p ; x))dx .
\end{align*}
If we denote the first term of the equation by $Y$ 
and set $a_i={\rm area}(U_i)$ and $[n]=\{ 1, \dots , n\}$,
then $Y$ is written as
\[ \int _{x\in X_n(U)}\phi (\gamma (g; x))dx=\sum _{F\colon [n]\to [n]} \left( \prod _{i=1}^na_{F(i)}\right) x_F, \]
where $x_F=\phi (\gamma _F)$ and 
$\gamma _F=\gamma (g; x)$ for $x$ in the case when each $x_i$ is in $U_{F(i)}$.
The real numbers $x_F$'s have the following properties.
\begin{enumerate}
 \item[(i)] For two maps $F\text{ and }G\colon [n]\to [n]$, 
if $\# F^{-1}(i)=\# G^{-1}(i)$ for each $1\leq i\leq n$ then $x_F=x_G$.
 \item[(ii)]  If a map $F\colon [n]\to [n]$ is bijective, then $x_F$ is non-zero.
\end{enumerate}
The property (i) follows from the invariance of $\phi$ under conjugation
and the property (ii) follows because $\phi (\beta )$ is non-zero. 
Therefore, the coefficient of $a_1\dots a_n$ in $Y$ is non-zero.
Since the polynomial $Y$ is not identically $0$, 
we can choose $a_i$'s s that $Y$ is non-zero.

Note that if we replace $a_i$'s by bigger ones fixing the ratio of any two of them
the term $Y$ stays non-zero.
On the other hand, the values $\phi (\gamma (g ; x))$ is bounded 
because of the construction of $g$, and we thus have 
\[ \lim _{p\to\infty}\frac{1}{p}\int _{x\notin X_n(U)}\phi (\gamma (g^p ; x))dx \to 0
\quad\text{(as}\quad a_1+\dots +a_n\to{\rm area}(D^2)\text{)}. \]
This completes the proof.
\end{proof}

As we noted in Remark \ref{homo}, 
The homomorphism $\hat{\Gamma}_n$ maps any homomorphism on $P_n(D^2)$ 
to a homomorphism on $\aread$. 
Hence the homomorphism 
\[ Q(P_n(D^2))/H^1(P_n(D^2); \R)\to Q(\aread )/H^1(\aread ; \R) \]
is also induced.
By an argument similar to the proof of Theorem \ref{inj}, 
the following proposition holds.
\begin{prop}
The map
\[ Q(B_n(D^2))/H^1(B_n(D^2); \R)\to Q(\aread)/H^1(\aread ; \R) \]
induced by the composition $\Gamma _n\circ Q(i)\colon Q(B_n(D^2))\to Q(\aread )$ is injective.
\end{prop}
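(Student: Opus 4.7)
The plan is to adapt the proof of Theorem~\ref{inj}, replacing the pure braid $\beta$ (which merely satisfied $\phi(\beta)\neq 0$) by a pure braid that moreover lies in the commutator subgroup. First, the induced map is well-defined: the restriction $Q(i)$ preserves homomorphisms, and by Remark~\ref{homo}, $\Gamma_n$ sends homomorphisms on $P_n(D^2)$ to homomorphisms on $\aread$. For injectivity I argue contrapositively: supposing $\phi\in Q(B_n(D^2))$ is not a homomorphism, I produce an element of $[\aread,\aread]$ on which $\psi:=\widetilde{\hat\Gamma_n(Q(i)(\phi))}$ is nonzero. This will contradict the assumption that $\Gamma_n(Q(i)(\phi))\in H^1(\aread;\R)$, since any class in $H^1(\aread;\R)$ is represented by a homomorphism, whose homogenization $\psi$ must vanish on $[\aread,\aread]$.

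A homogeneous quasi-morphism is a homomorphism iff it vanishes on the commutator subgroup (since any homogeneous quasi-morphism on an abelian group is automatically a homomorphism). Hence if $\phi$ is not a homomorphism, there exists $w\in [B_n(D^2),B_n(D^2)]$ with $\phi(w)\neq 0$. The image of $w$ in $\sym_n$ lies in the alternating group, so a suitable power $w^N$ is a pure braid, still lies in the commutator subgroup, and satisfies $\phi(w^N)=N\phi(w)\neq 0$. Write $w^N=\prod_k [\beta_k,\beta'_k]$ as a product of commutators in $B_n(D^2)$. For each factor, following the construction in the proof of Theorem~\ref{inj}, build diffeomorphisms $g_k,g'_k\in\aread$ that twist disjoint regions $U_1,\dots,U_n\subset D^2$ (of areas $a_1,\dots,a_n$) in the form of $\beta_k,\beta'_k$ and that are the identity on each $U_i$. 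Set $g=\prod_k [g_k,g'_k]$; then $g\in [\aread,\aread]$ and $g$ is the identity on every $U_i$. For any configuration $x$ with $x_i\in U_{F(i)}$ for a bijection $F$, the braid $\gamma(g;x)$ is a conjugate of $w^N$ by the permutation braid of $F$, so $\phi(\gamma(g;x))=\phi(w^N)\neq 0$ by conjugation-invariance.

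The computation from the proof of Theorem~\ref{inj} now applies verbatim: the contribution from $X_n(U)$ to $\psi(g)$ is a polynomial in $a_1,\dots,a_n$ whose coefficient of $a_1\cdots a_n$ equals $n!\phi(w^N)\neq 0$, while the contribution from $X_n(D^2)\setminus X_n(U)$ vanishes in the limit as $a_1+\cdots+a_n\to{\rm area}(D^2)$. Choosing the $a_i$'s appropriately gives $\psi(g)\neq 0$, the desired contradiction. The main technical point to verify is the identity $\gamma\bigl(\prod_k [g_k,g'_k];x\bigr)=\prod_k [\gamma(g_k;x),\gamma(g'_k;x)]$ in $P_n(D^2)$, which reduces to checking $\gamma(g_1g_2;x)=\gamma(g_1;x)\gamma(g_2;x)$ when $g_2$ fixes $x$ pointwise, as is the case here because each $g_k$ and $g'_k$ is the identity on the $U_i$'s.
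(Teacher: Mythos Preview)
Your overall strategy is sound, but there is a genuine gap in the construction of $g$. You write $w^N=\prod_k[\beta_k,\beta'_k]$ as a product of commutators in $B_n(D^2)$ and then claim to build $g_k,g'_k\in\aread$ realizing $\beta_k,\beta'_k$ that are \emph{the identity on each $U_i$}. But the construction in the proof of Theorem~\ref{inj} (decomposing into the pure generators $A_{i,j}$ and rotating annular neighbourhoods) only produces such diffeomorphisms for \emph{pure} braids. If $\beta_k$ has nontrivial image in $\sym_n$, any diffeomorphism realizing it must carry a neighbourhood of $x_i^0$ to a neighbourhood of $x_{\sigma(i)}^0$, so it cannot fix the disjoint regions $U_i$ pointwise. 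Since $w^N\in P_n(D^2)\cap[B_n(D^2),B_n(D^2)]$ but in general $P_n(D^2)\cap[B_n(D^2),B_n(D^2)]\supsetneq[P_n(D^2),P_n(D^2)]$ (e.g.\ $A_{1,2}A_{1,3}^{-1}$ has linking number $0$ but is nontrivial in $H_1(P_3)$), you cannot assume the $\beta_k,\beta'_k$ are pure. Consequently your key cocycle identity $\gamma(g_1g_2;x)=\gamma(g_1;x)\gamma(g_2;x)$, which you yourself note requires $g_2(x)=x$, is not available, and the rest of the computation does not go through as written.

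The gap can be closed by first arguing that one may take the test braid in $[P_n(D^2),P_n(D^2)]$. Since $\phi$ is homogeneous on $B_n(D^2)$, it is conjugation-invariant, so its restriction to $P_n(D^2)$ is a $\sym_n$-invariant homogeneous quasi-morphism. If this restriction were a homomorphism it would factor through a $\sym_n$-invariant linear form on $H_1(P_n(D^2);\R)\cong\R^{\binom{n}{2}}$, hence be a multiple of $\mathrm{lk}$; by homogeneity $\phi$ itself would then be a multiple of $\mathrm{lk}$, contradicting $\phi\notin H^1(B_n(D^2);\R)$. Thus there exists $w\in[P_n(D^2),P_n(D^2)]$ with $\phi(w)\neq 0$, and writing $w$ as a product of commutators of \emph{pure} braids, the construction of Theorem~\ref{inj} applies verbatim to each factor and your argument goes through. (Alternatively, one can bypass the explicit commutator construction by invoking $H^1(\aread;\R)=\R\cdot\mathrm{Cal}$ together with Remark~\ref{homo} and Theorem~\ref{inj} directly; this is likely what the paper's one-line ``similar argument'' has in mind.)
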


The homomorphism $\Gamma _n \colon Q(P_n(D^2))\to Q(\aread )$ 
can be defined also for the $2$-sphere $S^2$ instead of $D^2$
as Gambaudo and Ghys mentioned in their paper.
Let $\areas$ be the identity component of the group of area-preserving diffeomorphisms of $S^2$. 
Then we can choose a pure braid $\gamma (g; x)\in P_n(S^2)$
for any $g\in\areas$ and for almost every $x\in X_n(S^2)$ 
as in the case of the $2$-disk.
Since the group $\areas$ is homotopy equivalent to $SO(3)$ \cite{moser65}\cite{smale59}
and its fundamental group has order $2$, 
for any element $g$ of $\areas$
there exist two homotopy classes of paths 
connecting the identity and $g$ in $\areas$.
However, for any homogeneous quasi-morphism $\phi$ on $P_n(S^2)$, 
the value $\phi (\gamma (g; x))$ is independent of the choice of the path. 
In fact, from a path which represents the generator of $\pi _1(\areas )$ 
has order $2$ and is in the center of $P_n(S^2)$.
Hence the homomorphism $\Gamma _n\colon Q(P_n(S^2))\to Q(\areas )$ is defined.
Since the braid group $B_n(S^2)$ of the $2$-sphere on $n$-strands 
can be considered as a quotient group of the braid group $B_n(D^2)$, 
by an argument similar to the proof of Theorem \ref{inj}, 
we obtain the following theorem.

\begin{theorem}\label{sphere}
The composition
\[ \Gamma _n\circ Q(i)\colon Q(B_n(S^2))\to Q(\areas ) \]
is injective.
\end{theorem}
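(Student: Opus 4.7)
The plan is to imitate the proof of Theorem \ref{inj} inside a small disk embedded in the sphere, using the fact that the analogous construction of $\hat{\Gamma}_n$ on $\areas$ is already justified (including the $\pi_1(\areas) = \mathbb{Z}/2$ subtlety) in the paragraph preceding the theorem. First, I would take a non-trivial homogeneous quasi-morphism $\phi \in \hat{Q}(B_n(S^2))$ in the image of $Q(i)$, pick $\beta \in B_n(S^2)$ with $\phi(\beta) \neq 0$, and, by replacing $\beta$ by a suitable power, arrange that $\beta \in P_n(S^2)$. Since the inclusion of a disk into $S^2$ induces a surjection $P_n(D^2) \twoheadrightarrow P_n(S^2)$ sending the standard generators $A_{i,j}$ to a generating set of $P_n(S^2)$, I can express $\beta$ as a word in the images of the $A_{i,j}$.

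Next, I would fix an embedded disk in $S^2$ and, inside it, choose disjoint subsets $U_1, \dots, U_n$ together with auxiliary sets $V_{i,j}, W_{i,j}$ exactly as in the disk argument. Using the word for $\beta$, I would assemble a path $\{g_t\} \subset \areas$ supported in the embedded disk which twists the $U_i$'s in the pattern of $\beta$, and set $g = g_1$. Since $g$ is the identity outside the embedded disk and is the identity on each $U_i$, for any $F\colon [n]\to [n]$ and any $x$ with $x_i \in U_{F(i)}$ the pure braid $\gamma(g; x) \in P_n(S^2)$ is determined by $F$, and equals $\beta$ when $F$ is the identity.

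The polynomial computation now goes through verbatim. Writing $U = U_1 \cup \dots \cup U_n$, $a_i = {\rm area}(U_i)$ and
\[ Y = \int_{x \in X_n(U)} \phi(\gamma(g; x))\, dx = \sum_{F\colon [n]\to [n]} \left(\prod_{i=1}^n a_{F(i)}\right) x_F, \]
conjugation-invariance of $\phi$ and $\phi(\beta) \neq 0$ give properties (i) and (ii) from the disk proof, so the coefficient of $a_1 \cdots a_n$ is non-zero and $Y$ is a non-trivial polynomial. Choosing the $a_i$ so that $Y \neq 0$, and scaling them up while preserving their ratios, keeps $Y$ non-zero.

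The one place where the sphere differs from the disk is the control of the remainder $\tfrac{1}{p} \int_{X_n(S^2) \setminus X_n(U)} \phi(\gamma(g^p; x))\, dx$. Here I would argue that, because $g$ is supported in the embedded disk and equals the identity on each $U_i$, the pure braid $\gamma(g; x)$ lies in a fixed finite set of conjugacy classes as $x$ varies, so $\phi \circ \gamma(g; \cdot)$ is bounded on $S^2$ uniformly (and after taking powers one uses homogeneity and the defect bound as in \cite{brandenbursky11}). Consequently the remainder is $O\!\bigl({\rm area}(S^2) - (a_1 + \cdots + a_n)\bigr)$, which can be driven to $0$ by letting the $U_i$ fill the embedded disk and the embedded disk fill $S^2$. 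Combining the non-vanishing of $Y$ with the vanishing of the remainder shows $\hat{\Gamma}_n(\phi)(g^p)/p \not\to 0$, proving non-triviality. The main obstacle I expect is precisely this uniform-boundedness step, which is cosmetic rather than deep but must be checked carefully because $S^2$, unlike $D^2$, has no boundary to anchor the complement.
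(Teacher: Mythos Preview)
Your proposal is correct and follows exactly the route the paper indicates: the paper does not spell out a separate proof of Theorem~\ref{sphere} but simply says that, since $B_n(S^2)$ is a quotient of $B_n(D^2)$, the argument for Theorem~\ref{inj} goes through, and you have filled in precisely those adaptation steps (lifting $\beta$ to a word in the $A_{i,j}$, building $g$ inside an embedded disk, repeating the polynomial computation, and controlling the remainder). One small imprecision: the braids $\gamma(g;x)$ need not lie in a fixed \emph{finite} set of conjugacy classes, but they do have uniformly bounded word length in the standard generators, which is all you need for the boundedness of $\phi(\gamma(g;\cdot))$ and hence for the remainder estimate.
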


The homomorphism $Q(i)$ in the statement of Theorem \ref{sphere} 
is the one induced from the inclusion $i\colon P_n(S^2)\to B_n(S^2)$.

\section{Kernel of the homomorphism $\Gamma _n$}

The homomorphism $\Gamma _n\colon Q(P_n (D^2))\to Q(\aread )$ itself is not injective
although Theorem \ref{inj} holds.
In this section we study the kernel of the homomorphism $\Gamma _n$.

Let $G$ be a group and $H$ its finite index subgroup. 
We denote by $\overline{\beta}$ the image of an element $\beta\in G$ 
by the natural projection $G\to G/H$.
For each left coset $\sigma\in G/H$ of $G$ modulo $H$, 
we fix an element $\gamma _\sigma\in G$ such that $\overline{\gamma _\sigma}=\sigma$
and for any $\phi\in\hat{Q}(H)$ define the function $\hat{\trans}(\phi )\colon G\to\R$ by
\[ \hat{\trans}(\phi )(\beta )=
\frac{1}{(G:H)}\sum _{\sigma\in G/H}
\phi ({\gamma _{\overline{\beta\gamma _{\sigma}}}}^{-1} \beta\gamma _{\sigma}). \]
Since ${\gamma _{\overline{\beta\gamma _{\sigma}}}}^{-1} \beta\gamma _{\sigma}$ 
is in $H$, 
the function $\hat{\trans}(\phi )$ is well-defined on $G$.
\begin{lemma}\label{qhat}
For any quasi-morphism $\phi$ on $H$, 
the function $\hat{\trans}(\phi)\colon G\to\R$ is also a quasi-morphism.
\end{lemma}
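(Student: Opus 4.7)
The plan is to write $\hat{\trans}(\phi)(\beta)=\frac{1}{(G:H)}\sum_{\sigma\in G/H}\phi(h_\sigma(\beta))$ where $h_\sigma(\beta):=\gamma_{\overline{\beta\gamma_\sigma}}^{-1}\beta\gamma_\sigma\in H$, and then bound the coboundary of $\hat{\trans}(\phi)$ summand by summand. The central observation is a cocycle-style identity: for all $\alpha,\beta\in G$ and all $\sigma\in G/H$,
\[ h_\sigma(\alpha\beta)\;=\;h_{\overline{\beta\gamma_\sigma}}(\alpha)\cdot h_\sigma(\beta). \]
To verify this, set $\tau=\overline{\beta\gamma_\sigma}$, so by definition $\beta\gamma_\sigma=\gamma_\tau\, h_\sigma(\beta)$; then $\alpha\beta\gamma_\sigma=(\alpha\gamma_\tau)\,h_\sigma(\beta)=\gamma_{\overline{\alpha\gamma_\tau}}\,h_\tau(\alpha)\,h_\sigma(\beta)$, and the equality $\overline{\alpha\gamma_\tau}=\overline{\alpha\beta\gamma_\sigma}$ finishes the calculation after factoring out $\gamma_{\overline{\alpha\beta\gamma_\sigma}}$.

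With the identity in hand, apply the defect bound of $\phi$ on $H$ termwise:
\[ \bigl|\phi(h_\sigma(\alpha\beta)) - \phi(h_{\overline{\beta\gamma_\sigma}}(\alpha)) - \phi(h_\sigma(\beta))\bigr|\;\leq\; D(\phi). \]
The next ingredient is that left multiplication by $\beta$ permutes the left cosets, so the map $\sigma\mapsto\overline{\beta\gamma_\sigma}$ is a bijection of $G/H$; this reindexing gives $\sum_\sigma\phi(h_{\overline{\beta\gamma_\sigma}}(\alpha))=\sum_\tau\phi(h_\tau(\alpha))$. Summing the termwise inequalities over $\sigma\in G/H$ and dividing by $(G:H)$ then yields
\[ \bigl|\hat{\trans}(\phi)(\alpha\beta)-\hat{\trans}(\phi)(\alpha)-\hat{\trans}(\phi)(\beta)\bigr|\;\leq\; D(\phi), \]
so $\hat{\trans}(\phi)$ is a quasi-morphism on $G$ whose defect is at most $D(\phi)$.

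There is no analytic or topological obstacle in this argument; it is essentially the transfer construction from group cohomology, adapted to quasi-morphisms. The only step requiring genuine care is the bookkeeping behind the cocycle identity, which is also where the left-coset convention enters: one must keep track that $\gamma_{\overline{\beta\gamma_\sigma}}^{-1}\beta\gamma_\sigma$ really lies in $H$ and that the action $\beta\cdot(gH)=\beta gH$ is what produces the bijection used for reindexing. Once these conventions are fixed, the proof reduces to the one-line identity above followed by the coset permutation trick.
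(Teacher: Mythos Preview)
Your proof is correct and follows essentially the same approach as the paper's: the identity $h_\sigma(\alpha\beta)=h_{\overline{\beta\gamma_\sigma}}(\alpha)\,h_\sigma(\beta)$ is exactly the factorization the paper writes out in full, and your reindexing via the bijection $\sigma\mapsto\overline{\beta\gamma_\sigma}$ is precisely the step hidden in the paper's passage from the first to the second displayed equality. Your notation $h_\sigma(\beta)$ and explicit mention of the coset permutation make the argument a bit more transparent, but the mathematical content is identical, including the final defect bound $D(\hat{\trans}(\phi))\leq D(\phi)$.
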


\begin{proof}
Since the equality 
\[ {\gamma _{\overline{\beta _1\beta _2\gamma _\sigma}}}^{-1}
\beta _1\beta _2 \gamma _\sigma
=({\gamma _{\overline{\beta _1\beta _2\gamma _\sigma}}}^{-1}
\beta _1\gamma _{\overline{\beta _2\gamma _\sigma}})
({\gamma _{\overline{\beta _2\gamma _\sigma}}}^{-1}\beta _2\gamma _\sigma ) \]
holds, we have the inequality
\begin{align*}
&\quad |\hat{\trans}(\phi )(\beta _1\beta _2)-\hat{\trans}(\phi )(\beta _1)-\hat{\trans}(\phi )(\beta _2)| \\
&=\frac{1}{(G:H)}\Bigg|\sum _{\sigma\in G/H}\Big\{ 
\phi (({\gamma _{\overline{\beta _1\beta _2\gamma _\sigma}}}^{-1}
\beta _1\gamma _{\overline{\beta _2\gamma _\sigma}})
({\gamma _{\overline{\beta _2\gamma _\sigma}}}^{-1}\beta _2\gamma _\sigma )) \\
&\qquad\qquad\qquad\qquad\qquad 
-\phi ({\gamma _{\overline{\beta _1\gamma _{\sigma}}}}^{-1} \beta _1\gamma _{\sigma})
-\phi ({\gamma _{\overline{\beta _2\gamma _{\sigma}}}}^{-1} \beta _2\gamma _{\sigma}) \Big\} \Bigg| \\
&=\frac{1}{(G:H)}\Bigg|\sum _{\sigma\in G/H}\Big\{ 
\phi (({\gamma _{\overline{\beta _1\beta _2\gamma _\sigma}}}^{-1}
\beta _1\gamma _{\overline{\beta _2\gamma _\sigma}})
({\gamma _{\overline{\beta _2\gamma _\sigma}}}^{-1}\beta _2\gamma _\sigma )) \\
&\qquad\qquad\qquad\qquad\qquad 
-\phi ({\gamma _{\overline{\beta _1\beta _2\gamma _\sigma}}}^{-1}
\beta _1\gamma _{\overline{\beta _2\gamma _\sigma}})
-\phi ({\gamma _{\overline{\beta _2\gamma _{\sigma}}}}^{-1} \beta _2\gamma _{\sigma}) \Big\} \Bigg| \\
&\leq D(\phi ) . 
\end{align*}
Hence the function $\hat{\trans}(\phi )\colon G\to\R$ is also a quasi-morphism.
\end{proof}

The map $\hat{\trans}\colon \hat{Q}(H)\to\hat{Q}(G)$ is clearly $\R$-linear 
and induces a homomorphism $\trans \colon Q(P_n(D^2)) \to Q(B_n(D^2))$.
Furthermore, the following proposition holds.

\begin{prop}\label{well-defined}
The homomorphism $\trans \colon Q(H) \to Q(G)$ 
is independent of the choice of $\gamma _\sigma$'s.
\end{prop}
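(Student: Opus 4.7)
The plan is to show that if $\{\gamma_\sigma\}_{\sigma\in G/H}$ and $\{\gamma'_\sigma\}_{\sigma\in G/H}$ are two systems of coset representatives, then the resulting quasi-morphisms $\hat{\trans}(\phi)$ and $\hat{\trans}'(\phi)$ differ by a bounded function on $G$. Since $Q(G)=\hat{Q}(G)/C_b^1(G;\R)$, this establishes the proposition.

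First I would write $\gamma'_\sigma=\gamma_\sigma h_\sigma$ for a unique $h_\sigma\in H$. The crucial observation is that $\overline{\beta\gamma'_\sigma}=\overline{\beta\gamma_\sigma}$, because $h_\sigma\in H$; call this coset $\tau=\overline{\beta\gamma_\sigma}$. Setting $\alpha_\sigma=\gamma_\tau^{-1}\beta\gamma_\sigma\in H$, the representative corresponding to $\gamma'$ becomes
\[ {\gamma'}_{\overline{\beta\gamma'_\sigma}}^{-1}\beta\gamma'_\sigma
   = h_\tau^{-1}\gamma_\tau^{-1}\beta\gamma_\sigma h_\sigma
   = h_\tau^{-1}\,\alpha_\sigma\, h_\sigma. \]
Thus the $\sigma$-th term of $\hat{\trans}'(\phi)(\beta)-\hat{\trans}(\phi)(\beta)$ is $\phi(h_\tau^{-1}\alpha_\sigma h_\sigma)-\phi(\alpha_\sigma)$.

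Next I would apply the quasi-morphism defect to decompose $\phi(h_\tau^{-1}\alpha_\sigma h_\sigma)$: two applications of the defining inequality for $\phi$ give
\[ \bigl|\phi(h_\tau^{-1}\alpha_\sigma h_\sigma)-\phi(\alpha_\sigma)-\phi(h_\sigma)-\phi(h_\tau^{-1})\bigr|\leq 2D(\phi). \]
Summing over $\sigma\in G/H$ (and dividing by $(G:H)$) produces the upper bound
\[ \bigl|\hat{\trans}'(\phi)(\beta)-\hat{\trans}(\phi)(\beta)\bigr|
   \leq 2D(\phi)+\frac{1}{(G:H)}\Bigl(\sum_{\sigma\in G/H}|\phi(h_\sigma)|
       +\sum_{\sigma\in G/H}|\phi(h_\tau^{-1})|\Bigr). \]

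The key step is to note that as $\sigma$ ranges over $G/H$, the index $\tau=\beta\sigma$ (left multiplication by $\beta$ permutes cosets bijectively) also ranges once over $G/H$. Hence the second sum is $\sum_{\tau\in G/H}|\phi(h_\tau^{-1})|$, which together with the first sum consists of finitely many real numbers \emph{independent of} $\beta$. Therefore the right-hand side is a constant depending only on $\phi$, $H$, and the two systems of representatives, so $\hat{\trans}'(\phi)-\hat{\trans}(\phi)$ is bounded on $G$ and they represent the same class in $Q(G)$.

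The only subtle point is keeping track of the reindexing $\sigma\mapsto \tau=\overline{\beta\gamma_\sigma}$ when one splits the cross-terms: I expect that recognizing this as a bijection of $G/H$, which turns the potentially $\beta$-dependent contributions $\phi(h_\tau^{-1})$ into a $\beta$-independent finite sum, is the main content of the argument. Everything else is a routine application of the quasi-morphism inequality.
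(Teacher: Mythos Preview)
Your argument is correct, but it proceeds quite differently from the paper's proof. You work directly at the level of $\hat{Q}$: writing $\gamma'_\sigma=\gamma_\sigma h_\sigma$, you bound the difference $\hat{\trans}'(\phi)-\hat{\trans}(\phi)$ termwise using the defect inequality, and the reindexing $\sigma\mapsto\tau=\overline{\beta\gamma_\sigma}$ (left multiplication by $\beta$ on $G/H$) makes the residual sums $\beta$-independent. The paper instead assumes $\phi$ is homogeneous, picks $k$ with $\beta^k\in H$, and computes the homogenization of $\hat{\trans}(\phi)$ explicitly as
\[
\lim_{p\to\infty}\frac{1}{p}\hat{\trans}(\phi)(\beta^p)
=\frac{1}{(G:H)k}\sum_{\sigma\in G/H}\phi(\gamma_\sigma^{-1}\beta^k\gamma_\sigma),
\]
and then observes that $\phi(\gamma_\sigma^{-1}\beta^k\gamma_\sigma)$ depends only on the coset $\sigma$ because homogeneous quasi-morphisms are conjugation-invariant in $H$. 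Your route is more elementary in that it never invokes homogenization or conjugation-invariance; the paper's route has the advantage of producing the displayed formula above, which it immediately reuses to prove Corollary~\ref{id} and later Proposition~\ref{mean}.
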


\begin{proof}
Suppose that $\phi$ is a homogeneous quasi-morphism on $H$.
If an element $\beta$ is in $H$, then $\overline{\gamma _\sigma \beta}=\sigma$
for each $\sigma\in G/H$. 
For any $\beta\in G$ there exists an integer $k$ 
such that $\beta ^k$ is in $H$ 
and we have
\begin{align}\label{indep}
\lim _{p\to\infty} \frac{1}{p}\hat{\trans}(\phi )(\beta ^p)
&=\lim _{p'\to\infty} \frac{1}{kp'}\hat{\trans}(\phi )(\beta ^{kp'}) \nonumber \\
&=\lim _{p'\to\infty} \frac{1}{(G:H)kp'}\sum _{\sigma\in G/H}
\phi (\gamma _\sigma ^{-1}\beta^k\gamma _\sigma )^{p'} \nonumber \\
&=\frac{1}{(G:H)k}\sum _{\sigma\in G/H}\phi (\gamma _\sigma ^{-1}\beta^k\gamma _\sigma ). 
\end{align}
Since $\phi$ is invariant under conjugations in $H$,
the value $\phi (\gamma _\sigma ^{-1}\beta^k\gamma _\sigma )$ depends only on $\sigma$.
\end{proof}

Let $Q(i)\colon Q(G)\to Q(H)$ be the homomorphism induced by 
the inclusion $i\colon H\to G$. 
As a corollary to Equality (\ref{indep}), we have the following.

\begin{cor}\label{id}
The composition $\trans\circ Q(i)\colon Q(G)\to Q(G)$
is the identity on $Q(G)$.
Furthermore, we have the decomposition
\[ Q(H)={\rm Ker}(\trans )\oplus{\rm Im}(Q(i)) \]
as vector spaces.
\end{cor}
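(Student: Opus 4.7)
The plan is to reduce the first assertion to a direct computation using Equality~(\ref{indep}) and the conjugation-invariance of homogeneous quasi-morphisms, and then to deduce the direct-sum decomposition by purely formal linear algebra.

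For the first assertion, I take a homogeneous $\phi \in Q(G)$ and set $\psi = Q(i)(\phi)$, which is simply the restriction $\phi|_H$. I want to show $\hat{\trans}(\psi)$ coincides with $\phi$ after passing to $Q(G)$, i.e.\ as homogeneous quasi-morphisms. Fix $\beta \in G$ and an integer $k \geq 1$ with $\beta^k \in H$. Applying Equality~(\ref{indep}) yields
\[
\widetilde{\hat{\trans}(\psi)}(\beta) \;=\; \frac{1}{(G:H)\,k}\sum_{\sigma \in G/H}\phi\bigl(\gamma_\sigma^{-1}\beta^k\gamma_\sigma\bigr).
\]
Since $\phi$ is a homogeneous quasi-morphism on the ambient group $G$, it is invariant under conjugation by every element of $G$ (not merely by elements of $H$), so each summand equals $\phi(\beta^k)=k\,\phi(\beta)$. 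The $(G:H)$ identical terms together with the factor $1/((G:H)k)$ collapse to $\phi(\beta)$, proving $\trans\circ Q(i)=\mathrm{id}_{Q(G)}$.

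For the decomposition, I would now argue formally. The identity $\trans\circ Q(i)=\mathrm{id}$ implies that $Q(i)$ is injective and that the endomorphism $P=Q(i)\circ\trans\colon Q(H)\to Q(H)$ satisfies $P^2=Q(i)\circ(\trans\circ Q(i))\circ\trans = P$, so $P$ is an idempotent. Its image clearly lies in $\mathrm{Im}\,Q(i)$, and since $P\circ Q(i)=Q(i)$ the reverse inclusion also holds, giving $\mathrm{Im}\,P=\mathrm{Im}\,Q(i)$. Its kernel contains $\mathrm{Ker}\,\trans$, and conversely $P(\psi)=0$ forces $\trans(\psi)=0$ by injectivity of $Q(i)$, so $\mathrm{Ker}\,P=\mathrm{Ker}\,\trans$. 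The standard splitting $Q(H)=\mathrm{Im}\,P\oplus\mathrm{Ker}\,P$ for an idempotent $P$ then gives the asserted decomposition.

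There is no real obstacle in this proof; the only point that requires care is to make explicit that $\phi\in Q(G)$ being homogeneous entails full $G$-conjugation invariance (as recalled in the introduction), which is precisely what makes the averaging reproduce $\phi$ and is the reason the first assertion is not merely tautological.
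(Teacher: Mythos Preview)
Your proof is correct and follows essentially the same approach as the paper: the first assertion is exactly the intended corollary of Equality~(\ref{indep}) together with the $G$-conjugation invariance of homogeneous quasi-morphisms, and your idempotent argument for the decomposition spells out what the paper leaves implicit.
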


\begin{remark}
Of course, the homomorphism $\hat{\trans}(\phi )\colon G\to\R$ 
can be defined using the right coset $H\backslash G$ instead of $G/H$ by
\[ \hat{\trans}(\phi )(\beta )=
\frac{1}{(G:H)}\sum _{\sigma\in G/H}
\phi (\gamma _\sigma\beta{\gamma _{\overline{\gamma _\sigma\beta}}}^{-1}). \]
By an argument similar to the proof 
of Lemma \ref{qhat} and Proposition \ref{well-defined}, 
it is verified that this alternative definition is also well-defined 
and induces the same homomorphism $\trans\colon Q(H) \to Q(G)$. 
\end{remark}

\begin{remark}
The homomorphism $\trans\colon Q(H)\to Q(G)$ 
is just a straightforward generalization of transfer map, 
and it is also introduced in \cite{malyutin09} and \cite{walker12}.
\end{remark}

Since the pure braid groups $P_n(D^2)$ and $P_n(S^2)$ 
are finite index subgroups of the braid groups $B_n(D^2)$ and $B_n(S^2)$, respectively, 
the homomorphisms 
\[ \trans \colon Q(P_n(D^2))\to Q(B_n(D^2))
\quad\text{ and }\quad
\trans \colon Q(P_n(S^2))\to Q(B_n(S^2)) \]
can be defined and Corollary \ref{id} is true 
for $G=B_n(D^2), H=P_n(D^2)$ and $G=B_n(S^2), H=P_n(S^2)$, respectively. 

The following proposition is the main result of this section. 

\begin{prop}\label{mean}
The composition 
\[ \Gamma _n\circ Q(i)\circ\trans\colon Q(P_n(D^2))\to Q(\aread ) \]
coincides with $\Gamma _n$.
In particular, ${\rm Ker}(\Gamma _n)={\rm Ker}(\trans )$ 
and ${\rm Im}(\Gamma _n)={\rm Im}(\Gamma _n\circ Q(i))$.
\end{prop}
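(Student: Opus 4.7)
My strategy is to unfold the composition, reduce to a geometric lemma about how $\gamma(g;x)$ transforms under permuting the coordinates of $x$, and then perform a measure-preserving change of variables. For $\beta\in P_n(D^2)$ we have $\overline{\beta\gamma_\sigma}=\overline{\gamma_\sigma}=\sigma$, so
\[ Q(i)(\trans\phi)(\beta)=\frac{1}{n!}\sum_{\sigma\in S_n}\phi(\gamma_\sigma^{-1}\beta\gamma_\sigma). \]
Substituting $\beta=\gamma(g;x)$ and integrating over $X_n(D^2)$, the identity $\Gamma_n=\Gamma_n\circ Q(i)\circ\trans$ reduces to showing that each summand
\[ \int_{X_n(D^2)}\phi(\gamma_\sigma^{-1}\gamma(g;x)\gamma_\sigma)\,dx \]
equals $\Gamma_n(\phi)(g)$.

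The key geometric lemma I would prove is the following: for each $\sigma\in S_n$, one may take $\gamma_\sigma\in B_n(D^2)$ to be the class of a path $\alpha_\sigma$ in $X_n(D^2)$ from $x^0$ to $\sigma\cdot x^0$, and then for almost every $x\in X_n(D^2)$,
\[ \gamma(g;\sigma^{-1}\cdot x)=\gamma_\sigma^{-1}\gamma(g;x)\gamma_\sigma\quad\text{in }P_n(D^2). \]
To verify this, I would compare the two loops at $x^0$ in $X_n(D^2)$, namely $l(g;\sigma\cdot x)$ and $\alpha_\sigma\ast(\sigma\cdot l(g;x))\ast\alpha_\sigma^{-1}$. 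Their Hamiltonian portions coincide because the coordinatewise action of $g_t$ commutes with the coordinate permutation $\sigma$. Their affine portions differ precisely by two triangles in $(\R^2)^n$, with vertices $\{x^0,\sigma\cdot x^0,\sigma\cdot x\}$ and $\{x^0,\sigma\cdot x^0,\sigma\cdot g(x)\}$; for almost every $x$ both triangles avoid the big diagonal and lie in $X_n(D^2)$, yielding the required homotopy. Since $\sigma\cdot l(g;x)$ and $l(g;x)$ project to the same loop in the unordered configuration space, the loop $\alpha_\sigma\ast(\sigma\cdot l(g;x))\ast\alpha_\sigma^{-1}$ represents $\gamma_\sigma\gamma(g;x)\gamma_\sigma^{-1}$ in $P_n(D^2)\subset B_n(D^2)$, and substituting $\sigma\mapsto\sigma^{-1}$ gives the stated form. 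Granted the lemma, the change of variables $y=\sigma^{-1}\cdot x$ (a measure-preserving permutation on $X_n(D^2)$) gives
\[ \int_{X_n(D^2)}\phi(\gamma_\sigma^{-1}\gamma(g;x)\gamma_\sigma)\,dx=\int_{X_n(D^2)}\phi(\gamma(g;\sigma^{-1}\cdot x))\,dx=\Gamma_n(\phi)(g), \]
finishing the proof of the main identity.

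For the ``in particular'' statements: the identity gives $\mathrm{Im}(\Gamma_n)\subseteq\mathrm{Im}(\Gamma_n\circ Q(i))$, and the reverse inclusion is trivial. The inclusion $\mathrm{Ker}(\trans)\subseteq\mathrm{Ker}(\Gamma_n)$ is immediate, while the reverse follows from Theorem \ref{inj}: if $\Gamma_n(\phi)=0$ then $\Gamma_n\circ Q(i)(\trans\phi)=0$, and the injectivity of $\Gamma_n\circ Q(i)$ forces $\trans\phi=0$. The main obstacle is the rigorous ``almost every $x$'' control in the key lemma: the exceptional set of $x$ for which one of the two triangles meets the big diagonal may a priori have positive measure, and the cleanest fix is probably a generic choice of $\alpha_\sigma$ (e.g., a small perturbation of the affine segment from $x^0$ to $\sigma\cdot x^0$) combined with a measure-theoretic bound showing that the exceptional contribution is a bounded function of $g$, which is enough to obtain equality in $Q(\aread)$.
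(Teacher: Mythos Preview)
Your overall architecture matches the paper's: unfold $Q(i)\circ\trans$ on pure braids as $\frac{1}{n!}\sum_\sigma\phi(\gamma_\sigma^{-1}\,\cdot\,\gamma_\sigma)$, then show that each summand integrates to $\Gamma_n(\phi)(g)$ via a change of variables $x\mapsto\sigma^{-1}\cdot x$; the kernel/image consequences are derived exactly as in the paper. The divergence is in how you justify that conjugation by $\gamma_\sigma$ corresponds to permuting coordinates.

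Your ``key lemma'' asserts the \emph{exact} identity $\gamma(g;\sigma^{-1}\cdot x)=\gamma_\sigma^{-1}\gamma(g;x)\gamma_\sigma$ for a.e.\ $x$, proved by filling in affine triangles with vertices $\{x^0,\sigma\cdot x^0,\sigma\cdot x\}$. This fails concretely: the edge $x^0\to\sigma\cdot x^0$ does not depend on $x$, and it can meet the big diagonal for \emph{every} $x$ (already for $n=2$ and $\sigma$ the transposition, the midpoint of the straight segment from $(x_1^0,x_2^0)$ to $(x_2^0,x_1^0)$ lies on the diagonal). Replacing this edge by a general $\alpha_\sigma$ does not save the triangle filling, and the residual ``error braid'' is only locally constant on the complement of a codimension-one bad set, so it need not be globally trivial. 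You correctly flag this as the main obstacle, and your proposed fix---accept a bounded discrepancy and pass to $Q(\aread)$---is the right move.

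The paper implements that fix more cleanly by exploiting a point you use only implicitly: since $\phi$ is homogeneous on $P_n(D^2)$, the value $\phi(\gamma_\sigma^{-1}\beta\gamma_\sigma)$ is independent of the representative $\gamma_\sigma$ of the coset. Hence one may let $\gamma_\sigma$ \emph{depend on $x$}, choosing it to be the explicit braid $\beta(\sigma;x)$ built from the two affine legs $x^0\to x$ and $x\to\sigma\cdot x^0$. This yields the exact relation
\[
\beta(\sigma;x)\,\gamma(g;\sigma^{-1}\cdot x)\,\beta(\sigma;x)^{-1}
=\gamma(g;x)\,\beta(\sigma;g_\ast x)\,\beta(\sigma;x)^{-1},
\]
where the correction on the right is uniformly bounded in $x$ and in $p$ when $g$ is replaced by $g^p$. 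Homogenizing then kills the error, with no triangle filling or exceptional-set analysis required. Your argument becomes complete once you make this substitution; without it, the exact identity you state does not hold.
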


\begin{proof}
Let $\sym _n$ be the symmetric group of $n$ symbols. 
By Equality (\ref{indep}), 
for any homogeneous quasi-morphism $\phi\in Q(P_n(D^2))$ 
and any area-preserving diffeomorphism $g\in\aread$, 
\begin{align}\label{composetrans}
\lim _{p\to\infty}\frac{1}{p}
\hat{\Gamma}_n\circ Q(i)\circ\hat{\trans} (\phi )(g^p)
=\frac{1}{n!}\sum _{\sigma\in\sym _n}
\lim _{p\to\infty}\frac{1}{p}
\int _{x\in X_n(D^2)}
\phi (\gamma _\sigma\gamma (g^p; x)\gamma ^{-1}_\sigma )dx. 
\end{align}

For any $\sigma\in\sym _n$ and almost all $x\in D^2$, 
we set the path $l\colon [0, 1]\to X_n(D^2)$ by
\[ l(t)=\left\{ \begin{array}{ll}
\{ (1-2t)x_i^0+2tx_i\} &\displaystyle (0\leq t\leq\frac{1}{2}) \\[.6em]
\{ (2-2t)x_i+(2t-1)x_{\sigma (i)}^0\} &\displaystyle (\frac{1}{2}\leq t\leq 1) \\
\end{array}\right. .\]
Considering the path $l$ as a loop in the quotient space $X_n(D^2)/\sym _n$, 
we define the braid $\beta (\sigma ; x)$ to be the braid represented by the loop $l$. 
Then by definition, 
\[ \beta (\sigma ; x)\gamma (g; \sigma ^{-1}(x))
\beta (\sigma ; g_\ast x)^{-1}=\gamma (g; x), \]
where the symmetric group $\sym _n$ acts on $X_n(D^2)$ by the permutation 
\[ \sigma (x_1, \dots , x_n)=(x_{\sigma (1)}, \dots , x_{\sigma (n)}). \]
Since the homomorphism $\trans\colon Q(P_n(D^2))\to Q(B_n(D^2))$ is defined 
independently to the choice of braids $\gamma _\sigma$'s, 
we may choose $\gamma _\sigma$ to be $\beta (\sigma ; x)$. 
Hence we have 
\begin{align*}
\gamma _\sigma\gamma (g; \sigma ^{-1}(x))\gamma _\sigma ^{-1}
&=\beta (\sigma ;x)\gamma (g; \sigma ^{-1}(x))\beta (\sigma ;x)^{-1} \nonumber \\
&=\gamma (g; x)\beta (\sigma ; g_\ast (x))\beta (\sigma ; x)^{-1}.
\end{align*}
Since the function $\phi(\beta (\sigma ; \cdot ))\colon D^2\to\R$ is bounded on $D^2$, 
we have 
\begin{align*}
&\quad \lim _{p\to\infty}\frac{1}{p}
\int _{x\in X_n(D^2)}
\phi (\gamma _\sigma\gamma (g^p; x)\gamma ^{-1}_\sigma )dx \\
&=\lim _{p\to\infty}\frac{1}{p}
\int _{x\in X_n(D^2)}
\phi (\gamma _\sigma\gamma (g^p; \sigma ^{-1}(x))\gamma ^{-1}_\sigma )dx \\
&=\lim _{p\to\infty}\frac{1}{p}
\int _{x\in X_n(D^2)}
\phi (\gamma (g^p; x))dx. 
\end{align*}
Therefore, by Equality (\ref{composetrans}), 
\begin{align*}
\lim _{p\to\infty}\frac{1}{p}
\hat{\Gamma}_n\circ Q(i)\circ\hat{\trans} (\phi )(g^p)
&=\frac{1}{n!}\sum _{\sigma\in\sym _n}
\lim _{p\to\infty}\frac{1}{p}
\int _{x\in X_n(D^2)}
\phi (\gamma (g^p; x))dx \\
&=\lim _{p\to\infty}\frac{1}{p}\hat{\Gamma}_n(\phi )(g^p)
\end{align*}
and thus we have $\Gamma _n\circ Q(i)\circ\trans=\Gamma _n$.

Then obviously ${\rm Ker}(\trans )\subseteq{\rm Ker}(\Gamma _n)$
and ${\rm Im}(\Gamma _n)={\rm Im}(\Gamma _n\circ Q(i))$ hold.
If $\phi\in{\rm Ker}(\Gamma _n)$ then 
\[ \Gamma _n\circ Q(i)\circ\trans (\phi)=\Gamma _n(\phi )=0 \]
and hence $\trans (\phi)=0$ by Theorem \ref{inj}.
Thus we have ${\rm Ker}(\Gamma _n)\subseteq{\rm Ker}(\trans )$.
\end{proof}

\begin{remark}
Proposition \ref{mean} also holds for 
$P_n(S^2)$ and $\areas$ instead of $P_n(D^2)$ and $\aread$, respectively.
\end{remark}

\bibliographystyle{amsplain}

\begin{thebibliography}{1}

\bibitem{brandenbursky11}
M.~Brandenbursky, \emph{On quasi-morphisms from knot and braid invariants}, J.
  Knot Theory Ramifications \textbf{20} (2011), no.~10, 1397--1417.

\bibitem{ep03}
M.~Entov and L.~Polterovich, \emph{Calabi quasimorphism and quantum homology.},
  Int. Math. Res. Not. (2003), no.~30, 1635--1676.

\bibitem{gg97}
J-.~M. Gambaudo and \'E. Ghys, \emph{Enlacements asymptotiques}, Topology
  \textbf{36} (1997), no.~6, 1355--1379.

\bibitem{gg04}
\bysame, \emph{Commutators and diffeomorphisms of surfaces}, Ergodic Theory
  Dynam. Systems \textbf{24} (2004), no.~5, 1591--1617.

\bibitem{malyutin09}
A.~V. Malyutin, \emph{Operators in the spaces of pseudocharacters of braid
  groups}, Algebra i Analiz \textbf{21} (2009), no.~2, 136--165.

\bibitem{moser65}
J.~Moser, \emph{On the volume elements on a manifold}, Trans. Amer. Math. Soc.
  \textbf{120} (1965), no.~2, 286--294.

\bibitem{smale59}
S.~Smale, \emph{Diffeomorphisms of the $2$-sphere}, Proc. Amer. Math. Soc.
  \textbf{10} (1959), no.~4, 621--626.

\bibitem{walker12}
A.~Walker, \emph{Surface maps into free groups}, Ph.D. thesis, California
  Institute of Technology, 2012.

\end{thebibliography}
\providecommand{\bysame}{\leavevmode\hbox to3em{\hrulefill}\thinspace}
\providecommand{\MR}{\relax\ifhmode\unskip\space\fi MR }
\providecommand{\MRhref}[2]{%
  \href{http://www.ams.org/mathscinet-getitem?mr=#1}{#2}
}
\providecommand{\href}[2]{#2}

\end{document}